\newcommand{\beq}{\begin{equation}}
\newcommand{\eeq}{\end{equation}}
\newcommand{\bdism}{\begin{displaymath}}
\newcommand{\edism}{\end{displaymath}}
\newcommand{\setN}{\mathbb N}
\newcommand{\setR}{\mathbb R}
\newtheorem{theorem}{Theorem}[section]
\newtheorem{proposition}[theorem]{Proposition}
\newtheorem{lemma}[theorem]{Lemma}
\newtheorem{remark}[theorem]{Remark}
\author{\scshape Luca Fabrizio Di Cerbo}
\title{\bf Generic properties of homogeneous Ricci solitons}
\begin{document}

\thanks{*Supported in part by the Simons Foundation.}
\address{Department of Mathematics, Duke University, Box 90320, Durham, NC 27708-0320,
USA} \email{luca@math.duke.edu}
\begin{abstract}
We discuss the geometry of homogeneous Ricci solitons. After showing
the non-existence of compact homogeneous and non-compact steady
homogeneous solitons, we concentrate on the study of left invariant
Ricci solitons. We show that, in the unimodular case, the Ricci
soliton equation does not admit solutions in the set of left
invariant vector fields. We prove that a left invariant soliton of
gradient type must be a Riemannian product with nontrivial Euclidean
de Rham factor. As an application of our results we prove that any
generalized metric Heisenberg Lie group is a non-gradient left
invariant Ricci soliton of expanding type.
\end{abstract}
\maketitle

\section{Introduction}
\pagenumbering{arabic}

A Ricci soliton is a solution of the Ricci flow
\begin{align}\label{Ricci equation}
&\frac{\partial g}{\partial t}=-2Ric_g\\ \notag & g(0)=g_0
\end{align}
that changes only by diffeomorphisms and scale. More precisely,
$g(t)$ is called Ricci soliton if there exist a smooth function
$\sigma(t)$ and a $1$-parameter family of diffeomorphisms
$\left\{\psi_{t}\right\}$ of $M^{n}$ such that
$g(t)=\sigma(t)\psi^{*}_{t}(g_{0})$ with $\sigma(0)=1$ and
$\psi_{0}=id_{M^{n}}$. It is easy to see that this condition is
equivalent to requiring that the initial metric $g_0$ satisfies the
following Einstein like identity
\begin{align}\label{lie soliton}
-2Ric_{g_{0}}=2\lambda g_{0}+ L_{X}g_{0},
\end{align}
where $\lambda$ is a real number and $X$ a complete vector field;
for the details we refer to \cite{Chow1}. We then say that the
soliton is expanding, shrinking, or steady if $\lambda>0$,
$\lambda<0$, or $\lambda=0$ respectively. Finally, if the vector
field $X$ is the gradient field of a smooth function $f$, one says
that the soliton is a gradient Ricci soliton.

The soliton theories in the compact and complete non-compact cases
are drastically different. In the compact case it is easy to prove
the non-existence of steady and expanding solitons, see
\cite{Chow1}; while a result of Perelman \cite{Per} ensures that any
shrinking soliton must be of gradient type. Moreover, there are no
solitons in dimension two and three as proved by Hamilton and Ivey
\cite{Ha1}, \cite{Ivey1}. Finally, it is interesting to notice that
nontrivial examples of compact gradient Ricci solitons were actually
constructed by Koiso in \cite{Koi}. These examples start in
dimension four and are K\"ahler. For more details about compact
K\"ahler-Ricci solitons we refer to \cite{Chow1}.

The complete non-compact theory is much richer. Besides the
shrinking and expanding Gaussian solitons on $\setR^{n}$ and the
cylinder soliton on $S^{n}\times\setR$ $(n\geq2)$ it is worthwhile
to mention the Hamilton ``cigar'' soliton on $\setR^{2}$, the
radially symmetric steady Bryant solitons on $\setR^{n}$ $(n\geq3)$
and their generalization by Ivey, who constructed  Ricci solitons on
doubly warped products, see \cite{Chow1}, \cite{Bry}, \cite{Ivey2}.
It turns out that all these examples are of gradient type.

A completely different family of solitons has been discovered by
Lauret, see \cite{Lau} and the more update \cite{Lau2}. In
\cite{Lau}, Lauret searches for a notion weakening the Einstein
condition for a left invariant metric $g$ on a nilpotent Lie group
$N^{n}$. This is motivated by the fact that nilpotent Lie groups do
not admit any left invariant Einstein metric, see \cite{Milnor}. The
Ricci soliton condition \eqref{lie soliton} clearly provides a
substitute candidate and Lauret proves that, given a nilpotent Lie
group $N^{n}$ with Lie algebra $\mathfrak{n}$ and a left invariant
metric $g$, then \eqref{lie soliton} is satisfied iff
\begin{align}\label{lauret}
Ric_{g}=cI+D,
\end{align}
for some real number $c$ and $D\in Der(\mathfrak{n})$, where
$Der(\mathfrak{n})$ denotes the Lie algebra of derivations of
$\mathfrak{n}$. Note that this condition immediately implies the
existence of a symmetric derivation. In particular, as pointed out
in \cite{Lau}, we cannot solve the soliton equation over a
characteristically nilpotent Lie group. Recall that a Lie algebra
$\mathfrak{n}$ is characteristically nilpotent iff
$Der(\mathfrak{n})$ is nilpotent. Here we just notice that this
obstruction does not apply to the class of 2-step nilpotent Lie
algebras. In fact a characteristically nilpotent Lie algebra is at
least 4-step nilpotent, for more details see \cite{Leger}. Moreover
there are no examples of characteristically nilpotent Lie algebras
in dimension less or equal that six, as follows from \cite{Moro}.

It is now interesting to consider the equation in \eqref{lauret} on
a non-nilpotent Lie group $G$, and using the $1$-parameter group of
diffeomorphisms generated by $D$ show that $g$ is a Ricci soliton.
This fact is actually used by Lauret to exhibit some examples of
non-nilpotent solvable Lie groups which admit a Ricci soliton
structure. Solitons satisfying \ref{lauret} are also called
\emph{algebraic solitons}, for more details see \cite{Lau2},
\cite{Jablonski} and the bibliography therein.

Finally, we point out that the soliton condition on a nilpotent Lie
group can be neatly characterized in terms of metric solvable
extensions.
\begin{theorem}[Lauret]\label{lauret1}
Let $N^{n}$ be a simply connected nilpotent Lie group with Lie algebra $\mathfrak{n}$ and
left invariant metric $g$. Then $(N^{n},g)$ admits a Ricci soliton structure
iff $(\mathfrak{n},g)$ admits a standard metric solvable extension
$(\mathfrak{s}=\mathfrak{a} \oplus \mathfrak{n},\,\tilde{g})$
whose corresponding simply connected Lie group $(S,\tilde{g})$ is Einstein.
\end{theorem}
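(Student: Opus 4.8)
The plan is to reduce the whole statement to the algebraic characterization \eqref{lauret} and then to match it, in both directions, with an explicit computation of the Ricci curvature of a one-dimensional metric solvable extension. Throughout I identify left invariant tensors on $N^n$ and $S$ with the corresponding algebraic objects on $\mathfrak n$ and $\mathfrak s$, writing $\mathrm{Ric}_{\mathfrak n}$ and $\mathrm{Ric}_{\mathfrak s}$ for the two Ricci operators. The analytic input I would rely on is the standard formula for the Ricci operator of a metric solvable Lie algebra $\mathfrak s=\mathfrak a\oplus\mathfrak n$ with $\mathfrak a=[\mathfrak s,\mathfrak s]^{\perp}$ abelian: if $H_0\in\mathfrak a$ is the mean curvature vector, defined by $\langle H_0,\cdot\rangle=\mathrm{tr}\,\mathrm{ad}_{(\cdot)}$, then on the $\mathfrak n$-block one has $\mathrm{Ric}_{\mathfrak s}=\mathrm{Ric}_{\mathfrak n}-S(\mathrm{ad}_{H_0}|_{\mathfrak n})$, where $S(\cdot)$ denotes the self-adjoint part, while the mixed block is controlled by the skew parts of the $\mathrm{ad}_A$, $A\in\mathfrak a$, and hence vanishes as soon as these act self-adjointly on $\mathfrak n$.

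For the forward implication I start from a soliton $(N^n,g)$, so by \eqref{lauret} there are $c\in\setR$ and $D\in Der(\mathfrak n)$ with $\mathrm{Ric}_{\mathfrak n}=cI+D$; since $\mathrm{Ric}_{\mathfrak n}$ and $I$ are self-adjoint, so is $D$. I then form the rank-one extension $\mathfrak s=\setR H\oplus\mathfrak n$ by declaring $\mathrm{ad}_H|_{\mathfrak n}=D$, which defines a genuine Lie bracket precisely because $D$ is a derivation, and extend $g$ to $\tilde g$ with $H\perp\mathfrak n$ and $|H|^2=\mathrm{tr}\,D$. This is a standard metric solvable extension with nilradical $\mathfrak n$. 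Feeding $\mathrm{ad}_H=D$ into the formula above, the mixed block vanishes because $D$ is self-adjoint, the $\mathfrak n$-block becomes $cI+D-\tfrac{\mathrm{tr}\,D}{|H|^2}D=cI$ by the choice $|H|^2=\mathrm{tr}\,D$, and the $\mathfrak a$-block equals $-\mathrm{tr}(D^2)/|H|^2$. Hence $(S,\tilde g)$ is Einstein exactly when $-\mathrm{tr}(D^2)/\mathrm{tr}\,D=c$, that is, when $\mathrm{tr}(\mathrm{Ric}_{\mathfrak n}\,D)=c\,\mathrm{tr}\,D+\mathrm{tr}(D^2)=0$.

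The decisive point is therefore the identity $\mathrm{tr}(\mathrm{Ric}_{\mathfrak n}\,D)=0$ for the derivation $D$ of \eqref{lauret}, which I would obtain from the moment-map description of the nilpotent Ricci operator: writing the bracket of $\mathfrak n$ as a vector $\mu\in\Lambda^2\mathfrak n^{*}\otimes\mathfrak n$ acted on by $GL(\mathfrak n)$, one has that $\mathrm{tr}(\mathrm{Ric}_{\mathfrak n}\,E)$ is proportional to $\langle\pi(E)\mu,\mu\rangle$ for every self-adjoint $E$, where $\pi(E)\mu(X,Y)=E[X,Y]-[EX,Y]-[X,EY]$. Since $D$ is a derivation, $\pi(D)\mu=0$, so the pairing vanishes and the two Einstein constants agree; this also shows the extension is not ad hoc but is the Einstein solvmanifold canonically attached to the nilsoliton. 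For the converse I begin with a standard extension $(\mathfrak s=\mathfrak a\oplus\mathfrak n,\tilde g)$ satisfying $\mathrm{Ric}_{\mathfrak s}=\tilde cI$; restricting the Ricci formula to $\mathfrak n$ gives $\mathrm{Ric}_{\mathfrak n}=\tilde cI+S(\mathrm{ad}_{H_0}|_{\mathfrak n})$, so it suffices to check that $\phi:=S(\mathrm{ad}_{H_0}|_{\mathfrak n})$ is a derivation of $\mathfrak n$, whereupon \eqref{lauret} makes $(N^n,g)$ a soliton. In the rank-one case $H_0$ is a multiple of $H$ and the Einstein condition forces $\mathrm{ad}_H|_{\mathfrak n}$ to be self-adjoint, so that $\phi$ is a positive multiple of the inner derivation $\mathrm{ad}_H|_{\mathfrak n}$ restricted to the ideal $\mathfrak n$, hence a derivation.

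I expect the genuine obstacle to sit on the converse side, in controlling $S(\mathrm{ad}_{H_0}|_{\mathfrak n})$: the self-adjoint part of a derivation of $\mathfrak s$ need not be a derivation of $\mathfrak n$, so for $\dim\mathfrak a>1$ one must first invoke the structure theory of standard Einstein solvmanifolds to arrange that $\mathrm{ad}_{H_0}$ acts self-adjointly on $\mathfrak n$. On the forward side the only delicate input is the trace identity $\mathrm{tr}(\mathrm{Ric}_{\mathfrak n}\,D)=0$, which is exactly where the hypothesis that $D$ be a derivation is used and without which the Einstein constants of the $\mathfrak a$- and $\mathfrak n$-blocks would fail to coincide.
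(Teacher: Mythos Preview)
The paper does not prove this theorem. It is stated as a result of Lauret, attributed explicitly with ``(Lauret)'' in the heading and cited to \cite{Lau}, and no proof is given in the paper itself; it is used purely as background for the later discussion of nilsolitons and H-type groups. There is therefore no ``paper's own proof'' to compare your proposal against.

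For what it is worth, your outline follows the original argument in \cite{Lau} rather closely: the forward direction constructs the rank-one extension $\mathfrak s=\setR H\oplus\mathfrak n$ with $\mathrm{ad}_H=D$, and the equality of the Einstein constants on the $\mathfrak a$- and $\mathfrak n$-blocks is precisely the moment-map identity $\mathrm{tr}(\mathrm{Ric}_{\mathfrak n}D)=0$ you invoke. Two small points you would need to tidy up in a full write-up: the normalization $|H|^2=\mathrm{tr}\,D$ requires $\mathrm{tr}\,D>0$, which does hold on a non-abelian nilpotent algebra but deserves a line of justification; and for the converse with $\dim\mathfrak a>1$ you are right that one must appeal to the structure theory of standard Einstein solvmanifolds (Heber) to know that $\mathrm{ad}_A|_{\mathfrak n}$ is self-adjoint for every $A\in\mathfrak a$, not just that the symmetric part of $\mathrm{ad}_{H_0}$ happens to be a derivation.
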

Recall that a metric solvable extension of $(\mathfrak{n},g)$ is a
metric solvable Lie algebra $(\mathfrak{s}=\mathfrak{a} \oplus
\mathfrak{n},\,\tilde{g})$ such that $\left[
\mathfrak{s},\mathfrak{s}\right]
_{\mathfrak{s}}=\mathfrak{n}=\mathfrak{a}^{\perp}$, $\left[\,,\,
\right]_{\mathfrak{s}}\mid_{\mathfrak{n}\times\mathfrak{n}}=\left[\,,\,
\right]_{\mathfrak{n}}  $,
$\tilde{g}\mid_{\mathfrak{n}\times\mathfrak{n}}=g$. The metric
solvable extension is called standard if $\mathfrak{a}$ is abelian.
Although Lauret results are non-costructive, the characterization
given in Theorem \ref{lauret1} can be easily used to provide
examples. For instance, any generalized Heisenberg Lie group
(H-type) \cite{Tricerri} and many 2-step nilpotent Lie groups admit
a Ricci soliton structure, see \cite{Lau1} for a complete list of
all the known examples.

With what regard to the uniqueness of nilpotent Ricci solitons, the
following theorem provides a complete answer.

\begin{theorem}[Lauret]\label{lauret2}
Let $N^{n}$ be a simply connected nilpotent Lie group with Lie algebra $\mathfrak{n}$ and left invariant metric $g$.
If $g$ and $g^{'}$ are left invariant Ricci soliton metrics, then there exist $c>0$ and $\eta \in Aut(\mathfrak{n})$
such that $g^{'}=c\eta (g)$.
\end{theorem}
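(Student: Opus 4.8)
The plan is to translate everything into the geometry of a fixed Euclidean space on which the Lie bracket is allowed to vary, and to recognise nilsoliton metrics as the distinguished ``minimal'' points of a $GL$-orbit, for which uniqueness up to the maximal compact subgroup is supplied by real geometric invariant theory. First I would fix the vector space $V=\setR^{n}$ with its standard inner product $\langle\cdot,\cdot\rangle$ and identify $(\mathfrak{n},g)$ with $V$ so that $g$ becomes $\langle\cdot,\cdot\rangle$; the bracket of $\mathfrak{n}$ is then a fixed element $\mu\in\Lambda^{2}V^{*}\otimes V$ satisfying the Jacobi identity and nilpotency. Any second left invariant metric $g'$ can be written $g'(\cdot,\cdot)=\langle\phi\cdot,\phi\cdot\rangle$ for some $\phi\in GL(V)$, and a direct check shows that $(\mathfrak{n},\mu,g')$ is isometric to $(V,\phi\cdot\mu,\langle\cdot,\cdot\rangle)$, where $(\phi\cdot\mu)(X,Y)=\phi\,\mu(\phi^{-1}X,\phi^{-1}Y)$. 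Thus varying the metric on a fixed $\mathfrak{n}$ is the same as moving $\mu$ inside its $GL(V)$-orbit for the fixed inner product, and the stabiliser of $\mu$ under this action is exactly $Aut(\mathfrak{n})$.

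Next I would use the already stated characterisation \eqref{lauret}: $(\mathfrak{n},g)$ is a nilsoliton iff $Ric_{g}=cI+D$ with $c\in\setR$ and $D\in Der(\mathfrak{n})$. The key input is Lauret's formula expressing the Ricci operator of $(V,\mu,\langle\cdot,\cdot\rangle)$, up to a positive multiple of $\|\mu\|^{2}$, as the moment map $m(\mu)\in\mathrm{Sym}(V)$ of the $GL(V)$-representation on $\Lambda^{2}V^{*}\otimes V$. Under this identification the tangential gradient of the norm functional $\mu\mapsto\|\mu\|^{2}$ along the orbit is controlled by $m(\mu)$, and the splitting $Ric_{g}=cI+D$ with $D$ a derivation is precisely the vanishing of that tangential gradient after normalising the trace part. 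Hence \eqref{lauret} says exactly that $\mu$ is a \emph{minimal vector}, i.e.\ a critical point of $\|\cdot\|^{2}$ on the intersection of its $SL(n,\setR)$-orbit with a fixed sphere.

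The heart of the argument is then the uniqueness of minimal vectors: if $\mu$ and $\mu'$ are both minimal and lie in the same $GL(V)$-orbit, then $\mu'\in\setR^{>0}\,O(n)\cdot\mu$. This is the real Kempf--Ness statement, which follows from the strict convexity of $t\mapsto\|\exp(tA)\cdot\mu\|^{2}$ along the geodesics $\exp(tA)$, $A\in\mathrm{Sym}(V)$, of the symmetric space $GL(V)/O(V)$: a critical point of the norm in an orbit is unique modulo the maximal compact isotropy and the central scaling. Applying this to the two soliton brackets $\mu$ (for $g$) and $\mu'=\phi\cdot\mu$ (for $g'$), which are minimal and manifestly in the same orbit, yields a decomposition $\phi=s\,k\,\eta$ with $s>0$, $k\in O(V)$ and $\eta\in Aut(\mathfrak{n})=\mathrm{Stab}(\mu)$. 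Feeding this back into $g'(\cdot,\cdot)=\langle\phi\cdot,\phi\cdot\rangle$ and using that $k$ is $\langle\cdot,\cdot\rangle$-orthogonal gives $g'=s^{2}\,\eta(g)$, which is the desired conclusion with $c=s^{2}$.

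The main obstacle is precisely the convexity/uniqueness input of the third step: one must establish that the norm functional is convex along geodesics of $GL(V)/O(V)$ and strictly convex transverse to the stabiliser, so that minimal vectors in a fixed orbit form a single $\setR^{>0}O(n)$-orbit. This is the genuine geometric invariant theory content and cannot be reduced to a curvature computation; by contrast, the first two steps are essentially bookkeeping together with Lauret's moment-map identity. An alternative route, available from Theorem \ref{lauret1}, would pass to the associated standard Einstein solvable extensions and invoke the uniqueness of standard Einstein solvmanifolds with prescribed nilradical; this, however, merely relocates the same hard convexity phenomenon into Heber's structure theory.
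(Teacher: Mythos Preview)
The paper does not actually prove this theorem: it is stated in the introduction as a result of Lauret, attributed to \cite{Lau}, and is used only as background. There is therefore no ``paper's own proof'' to compare against.

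That said, your outline is a faithful sketch of Lauret's original argument. The identification of left invariant metrics on a fixed nilpotent Lie algebra with the $GL(n,\setR)$-orbit of a bracket $\mu$ in $\Lambda^{2}V^{*}\otimes V$, the interpretation of the nilsoliton condition $Ric=cI+D$ as the moment-map (minimal vector) condition for this action, and the appeal to the real Kempf--Ness uniqueness statement that minimal vectors in a single orbit form one $\setR^{>0}O(n)$-orbit --- these are exactly the ingredients Lauret uses. Your identification of the convexity/uniqueness of minimal vectors as the substantive analytic input is also accurate; this is supplied in Lauret's work by the real GIT framework (Richardson--Slodowy, Ness, Marian) rather than proved ab initio. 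The alternative route you mention through Theorem~\ref{lauret1} and Heber's uniqueness for standard Einstein solvmanifolds is likewise a legitimate, and historically intertwined, path to the same conclusion.
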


\begin{remark}
Interestingly, in a recent preprint \cite{Jablonski}, Jablonski has
extended many of the results of Lauret to the class of
solvmanifolds. Remarkably, a Ricci solvsoliton has to be an
algebraic Ricci soliton. Moreover, these solitons enjoy nice
uniqueness properties as in the nilpotent case. For more details the
reader is referred to \cite{Jablonski} and to the bibliography
therein.
\end{remark}

The first explicit construction of Lauret solitons has been obtained
by Baird and Danielo in \cite{Danielo}. In \cite{Danielo}, the
authors study $3$-dimensional Ricci solitons which projects via a
semi-conformal mapping to a  surfaces and obtain a complete
description of the soliton structures on ${\rm Nil}^{3}$ and ${\rm
Sol}^{3}$ predicted by Lauret.

In \cite{Lott}, Lott uses these particular soliton structures to
study the long time behavior of type III Ricci flow solutions and
also gives, among many other results, some explicit examples of four
dimensional homogeneous soliton solutions, e.g. on ${\rm Nil}^{4}$.
It turns out that all these solitons are expanding and of
non-gradient type. These are the first known examples of
non-gradient Ricci solitons. We also note that the property to be
non-gradient plays a important role in the analytical study of the
linear stability of these solutions, see \cite{Chr}.

In this paper, we concentrate on the study of the generic properties
of homogeneous Ricci solitons providing a new point of view on the
results found by Danielo, Baird and Lott.

\section{The scalar curvature of a homogeneous Ricci soliton}

In this section we study the evolution of the scalar curvature on a
homogeneous Ricci soliton. Recall that on a soliton solution to the
Ricci flow the scalar curvature satisfies the equation
\begin{align}\label{scalar equation}
R(t)=\frac{R_{0}}{\sigma(t)}=\frac{R_{0}}{1+2 \lambda t}.
\end{align}

In particular if the initial scalar curvature $R_{0}$ is constant it
stays constant during the evolution. In this case $R_{t}$ satisfies
the simple ODE
\begin{align}\label{Ricci equation}
&\frac{\partial R}{\partial t}=\frac{2(-\lambda)}{R_{0}}R^{2},~~ R(0)=R_0.\\ \notag
\end{align}
As proved in \cite{Ha1}, the scalar curvature of a general
Ricci flow solution evolves according the heat type equation
\begin{align}\label{evol scal}
\frac{\partial R}{\partial
t}=\Delta R+2\left|Ric\right|^{2},
\end{align}
we conclude that if the soliton has constant initial scalar
curvature with $\lambda=\frac{-R_{0}}{n}$ then it must be trivial.
In particular a nontrivial soliton structure cannot be associated to
a divergence free vector field $X$, as can be proved by considering
the trace of \eqref{lie soliton}. In summary we have:

\begin{lemma}\label{divergence}
A complete vector field $X$ associated to a nontrivial constant curvature Ricci soliton
satisfies $div(X)=k$, where $k$ is a nonzero constant.
\end{lemma}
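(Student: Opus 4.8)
The plan is to trace the soliton identity \eqref{lie soliton}. Contracting both sides with the inverse metric $g_0^{-1}$, the left-hand side becomes $-2R_0$, where $R_0$ denotes the scalar curvature; the term $2\lambda g_0$ contributes $2\lambda n$ with $n$ the dimension; and, writing $(L_Xg_0)_{ij}=\nabla_iX_j+\nabla_jX_i$, the Lie-derivative term contributes $g_0^{ij}(L_Xg_0)_{ij}=2\,g_0^{ij}\nabla_iX_j=2\,div(X)$. This produces the pointwise identity
\[
div(X)=-R_0-\lambda n .
\]
Since by hypothesis the initial scalar curvature $R_0$ is constant, and $\lambda$ and $n$ are fixed, the right-hand side is a constant; hence $div(X)\equiv k$ with $k=-R_0-\lambda n$. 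The entire content of the lemma thus reduces to showing $k\neq 0$.

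To rule out $k=0$, I would note first that $k=0$ is equivalent to $\lambda=-R_0/n$. This is exactly the borderline value already discussed immediately before the statement: for a constant scalar curvature soliton one has $\Delta R=0$, so \eqref{evol scal} and \eqref{scalar equation} together force $|Ric_{g_0}|^2=R_0^2/n$; the sharp inequality $|Ric|^2\ge R^2/n$, which is an equality only for Einstein metrics, then shows that $g_0$ is Einstein and the soliton is trivial. Invoking this fact, the assumption $k=0$ would contradict the nontriviality hypothesis, and therefore $k\neq 0$, which completes the argument.

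The computation of the trace is entirely routine, so the only genuine step is the last one. The obstacle here is conceptual rather than technical: one must recognize that the rigidity already established above the lemma --- constant scalar curvature together with $\lambda=-R_0/n$ forcing an Einstein, hence trivial, soliton --- is precisely what prevents the constant $div(X)$ from vanishing. No new estimate beyond what is recorded above the statement is required.
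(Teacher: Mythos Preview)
Your argument is correct and is essentially the same as the paper's: the paper also traces \eqref{lie soliton} to identify $div(X)$ with the constant $-R_0-\lambda n$, and then invokes the rigidity observation recorded just above the lemma (constant scalar curvature with $\lambda=-R_0/n$ forces the soliton to be Einstein, hence trivial) to exclude $k=0$. No additional idea is needed.
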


We can now use Lemma \ref{divergence} to obtain a complete
description of the compact case.

\begin{theorem}\label{luca1}
There are no non-trivial compact solitons with constant scalar
curvature.
\end{theorem}

\begin{proof}
Stokes' theorem.
\end{proof}
With regard to the compact homogeneous case it is interesting to
notice that Theorem \ref{luca1} can also be derived from the fact
that there is a natural scaling invariant quantity that is
increasing along the flow.

\begin{lemma}\label{scaling}
Let $(M^{n},g(t))$ be a compact homogeneous solution to the Ricci
Flow, then $R(t)V(t)^{\frac{2}{n}}$ is monotonically increasing
unless the initial manifold is Einstein.
\end{lemma}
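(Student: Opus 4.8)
The plan is to differentiate the functional $F(t)=R(t)\,V(t)^{2/n}$ directly, using the two standard evolution equations of the Ricci flow together with the spatial homogeneity of $g(t)$. The crucial observation is that a homogeneous initial metric stays homogeneous along the flow, since Ricci flow preserves isometries; hence at each time $t$ the scalar curvature $R(t)$, the norm $|{\rm Ric}|^2$, and every isometry-invariant scalar are constant in space. In particular $\Delta R=0$, which is precisely what kills the diffusion term in the evolution equation \eqref{evol scal} and reduces the whole problem to an ordinary differential computation. (The functional $F$ is moreover scale invariant, which is what makes it a natural object to track.)

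First I would record the two inputs. From \eqref{evol scal} together with $\Delta R=0$ one gets $\frac{dR}{dt}=2|{\rm Ric}|^2$. From the standard evolution of the volume form under the Ricci flow, namely $\partial_t\, d\mu=-R\, d\mu$, combined with the spatial constancy of $R$, one obtains $\frac{dV}{dt}=-R\,V$. I would then compute
\begin{align*}
\frac{dF}{dt}&=\frac{dR}{dt}\,V^{2/n}+\frac{2}{n}\,R\,V^{2/n-1}\frac{dV}{dt}\\
&=2|{\rm Ric}|^2V^{2/n}-\frac{2}{n}R^2V^{2/n}
=2\,V^{2/n}\Big(|{\rm Ric}|^2-\tfrac{R^2}{n}\Big).
\end{align*}

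Finally, the sign is settled by the pointwise Cauchy--Schwarz inequality $|{\rm Ric}|^2\geq \frac{({\rm tr}\,{\rm Ric})^2}{n}=\frac{R^2}{n}$, with equality exactly when ${\rm Ric}=\frac{R}{n}\,g$, i.e. when the metric is Einstein. Thus $\frac{dF}{dt}\geq 0$, strictly unless $(M^n,g)$ is Einstein (in which case the Einstein condition is preserved by the flow and $F$ stays constant), which is the assertion. I do not expect a serious obstacle here: the computation is routine, and the only points deserving care are the justification that homogeneity is preserved by the flow — so that $R$ is genuinely spatially constant and $\Delta R$ vanishes — and the implicit finiteness of $V(t)$, which is guaranteed in the compact homogeneous setting relevant to the intended application to Theorem \ref{luca1}.
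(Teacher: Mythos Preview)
Your argument is correct and essentially identical to the paper's: differentiate $R\,V^{2/n}$ using $R'=2|{\rm Ric}|^2$ and $V'=-RV$ (valid by homogeneity), obtaining $2V^{2/n}\big(|{\rm Ric}|^2-\tfrac{R^2}{n}\big)$, which the paper writes equivalently as $2\big|{\rm Ric}-\tfrac{R}{n}g\big|^2 V^{2/n}$. Your additional remarks on $\Delta R=0$ and the finiteness of $V$ are welcome clarifications but do not change the approach.
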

\begin{proof}
Because of the diffeomorphism invariance of the Ricci tensor and the
short time existence and uniqueness for the Ricci flow on compact
manifolds \cite{Ha2}, the flow preserves the isometries of the
initial Riemannian manifold. We conclude that a compact homogeneous
Riemannian manifold remains homogeneous during the flow. Using the
standard variation formulas for the volume and the scalar curvature
functions \cite{Ha2} we have
\begin{align}\notag
\frac{d}{dt}R(t)V(t)^{\frac{2}{n}}&=R^{'}(t)V(t)^{\frac{2}{n}}+\frac{2}{n}V(t)^{\frac{2-n}{n}}V^{'}(t)R(t)\\
\notag
&=2\left|Ric\right|^{2}V(t)^{\frac{2}{n}}-\frac{2}{n}R^{2}V(t)^{\frac{2}{n}}\\ \notag
&=2\left|Ric-\frac{R}{n}g\right|^{2}V(t)^{\frac{2}{n}}.
\end{align}
\end{proof}

Note that, because of the existence of the solitons discovered by
Lauret, there is no analogue of Lemma \ref{scaling} in the
noncompact homogeneous case. Nevertheless, we can use the
monotonicity property of the scalar curvature to derive the
following partial generalization of Theorem \ref{luca1}.

\begin{theorem}\label{luca2}
There are no steady non-compact Ricci solitons with constant scalar
curvature.
\end{theorem}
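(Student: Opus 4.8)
The plan is to replace the Stokes-theorem argument of Theorem \ref{luca1}, which is unavailable on a complete noncompact manifold, by a purely pointwise use of the two scalar-curvature evolution equations \eqref{scalar equation} and \eqref{evol scal}. Since the soliton is steady we have $\lambda=0$, so \eqref{scalar equation} gives $R_t=R_0/(1+2\lambda t)=R_0$; thus the (constant) scalar curvature is independent not only of the point, by hypothesis, but also of the time parameter $t$.

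First I would insert this into the heat-type identity \eqref{evol scal}. Spatial constancy gives $\Delta R=0$ and temporal constancy gives $\partial_t R=0$, so \eqref{evol scal} collapses to $0=2|Ric|^2$. This forces $Ric\equiv 0$; in particular $R_0=0$ and the underlying metric is Ricci flat.

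Next I would feed this back into the defining equation \eqref{lie soliton}. Taking its trace with $\lambda=0$ and using $\mathrm{tr}_g\, L_X g=2\,div(X)$ yields $-2R_0=2\,div(X)$, hence $div(X)=-R_0=0$. This is the contradiction: Lemma \ref{divergence} asserts that the complete field $X$ of any nontrivial constant-scalar-curvature soliton satisfies $div(X)=k\neq 0$. Therefore no nontrivial steady noncompact soliton of constant scalar curvature can exist, as claimed; equivalently, the forced condition $Ric\equiv 0$ already exhibits the metric as a trivial Einstein soliton.

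I expect the only genuinely delicate point to be bookkeeping rather than geometry: one must justify that the self-similar solution really obeys \eqref{scalar equation} and that differentiating it in $t$ is legitimate, which rests on completeness of $X$ so that the family $\psi_t$, and hence $g(t)=\sigma(t)\psi_t^*(g_0)$, exists for all $t$. It is also worth recording why the argument is confined to the steady case: for $\lambda\neq 0$ the same computation gives $|Ric|^2=-\lambda R_t^2/R_0$ rather than $0$, a relation perfectly compatible with $|Ric|^2>0$ when $\lambda>0$ and $R_0<0$. This is exactly the regime of the expanding homogeneous examples of Lauret, so no contradiction arises there and the hypothesis $\lambda=0$ is essential.
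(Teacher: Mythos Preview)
Your argument is correct and is essentially the paper's own proof: from constancy of $R$ in space (so $\Delta R=0$) and the steady soliton identity $R_t=R_0$, the evolution equation \eqref{evol scal} collapses to $0=2|Ric|^2$, forcing the metric to be Ricci flat and hence a trivial soliton. Your additional appeal to Lemma~\ref{divergence} via the trace of \eqref{lie soliton} is logically fine but redundant, as you yourself note; the paper simply stops at ``Ricci flat''.
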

\begin{proof}
By the formula
\begin{align}
\frac{dR(t)}{dt}=2\left|Ric\right|^{2}
\end{align}
we have that the scalar curvature is increasing unless the initial metric is Ricci flat.
\end{proof}
It is interesting to note that Theorems \ref{luca1} and \ref{luca2}
immediately extend to rule out the existence of compact homogeneous
and non-compact homogeneous steady breathers. For the definition of
breather see \cite{Chow1}. This observation implies the following
remark.
\begin{remark}
As pointed out in \cite{Ha2}, the Ricci flow equation on a given
homogeneous spaces reduces to an ODE on the finite dimensional
moduli space of homogeneous metrics. Now the nonexistence of steady
breathers then implies that these interesting geometrical ODEs have
no periodic solutions, at least for those initial data that admit
geometrical interpretation. For a detailed study of the Ricci flow
ODEs on three and four dimensional homogeneous spaces we refer to
\cite{Ise1} and \cite{Ise2}.
\end{remark}
Finally, we notice that the sign of the scalar curvature determines if the soliton is expanding or shrinking.
\begin{lemma}\label{segnoc}
Any constant scalar curvature soliton of expanding or shrinking type must have respectively negative or positive initial scalar curvature.
\end{lemma}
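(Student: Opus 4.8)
The plan is to pin down the sign of $R_0$ by comparing the two descriptions of how the scalar curvature evolves along the flow. First I would invoke the hypothesis that the initial scalar curvature is constant. As already observed before equation \eqref{Ricci equation}, a constant initial scalar curvature remains spatially constant throughout the flow (it scales and pulls back under the soliton diffeomorphisms), so $\Delta R \equiv 0$ and the general evolution equation \eqref{evol scal} collapses to $\partial_t R = 2\left|{\rm Ric}\right|^2 \geq 0$. Evaluating at $t=0$ gives $\partial_t R|_{t=0} = 2\left|{\rm Ric}_{g_0}\right|^2$.

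On the other hand, the same spatial constancy means the soliton scaling relation \eqref{scalar equation} holds, so $R_t = R_0/(1+2\lambda t)$. Differentiating this in $t$ and setting $t=0$ produces $\partial_t R|_{t=0} = -2\lambda R_0$ (equivalently, one reads this straight off the ODE \eqref{Ricci equation} by substituting $R=R_0$). Equating the two expressions yields the key identity $-2\lambda R_0 = 2\left|{\rm Ric}_{g_0}\right|^2$, that is, $\lambda R_0 = -\left|{\rm Ric}_{g_0}\right|^2 \leq 0$.

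From here the conclusion is immediate. Since $\left|{\rm Ric}_{g_0}\right|^2 > 0$ for any metric that is not Ricci flat, the product $\lambda R_0$ is strictly negative. Hence an expanding soliton ($\lambda>0$) forces $R_0<0$, while a shrinking soliton ($\lambda<0$) forces $R_0>0$, which is exactly the claim.

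The one delicate point, and the only real obstacle, is the equality case, in which $\left|{\rm Ric}_{g_0}\right|^2=0$, the metric is Ricci flat, and $R_0=0$: the identity then degenerates and carries no sign information. This is precisely the trivial/Gaussian situation, and I would dispose of it by the standing nontriviality assumption, since a Ricci flat metric carries no genuine soliton structure in the present sense (it is a fixed point of the normalized flow). In the homogeneous setting this can be made sharper: a homogeneous Ricci flat manifold is flat, so the equality case reduces to Euclidean space and is excluded outright. Thus the strict inequalities hold for all the solitons under consideration.
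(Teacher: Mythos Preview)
The paper does not actually supply a proof of this lemma; it is stated at the end of Section~2 as an evident consequence of the equations developed just before it. Your argument is precisely the one those equations are set up to deliver: from \eqref{evol scal} with $\Delta R=0$ you get $\partial_t R=2|Ric|^2\ge 0$, while differentiating \eqref{scalar equation} (or reading off \eqref{Ricci equation}) gives $\partial_t R|_{t=0}=-2\lambda R_0$, hence $\lambda R_0=-|Ric_{g_0}|^2\le 0$, with equality only in the Ricci-flat case. This is correct and is clearly the argument the author has in mind. Your handling of the degenerate case is also appropriate: in the paper's conventions an Einstein (in particular Ricci-flat) metric is a trivial soliton, and the homogeneous sharpening via Alekseevskii--Kimel'fel'd is a nice touch though not strictly needed here.
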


For more results concerning homogeneous shrinking solitons with
positive scalar curvature the reader is referred to Remark
\ref{Naber} below.

\section{Left invariant Ricci solitons}

In this section we concentrate on the study of the geometrical
properties of left invariant Ricci solitons. First, we explain why
all the known explicit examples are of expanding type, then we study
the possibility to solve the soliton equation within the set of left
invariant vector fields and then we show that in ``general" a left
invariant soliton cannot be of gradient type. We point out that we
usually work on simply connected Lie group since a non-trivial
soliton structure on a Lie group with non-trivial fundamental group
can clearly be transported to its universal cover.

\subsection{The sign of the soliton constant}

As noticed in Lemma \ref{segnoc} the sign of the scalar curvature
determines the type of the soliton. The sign of the scalar curvature
associated to a left invariant metric has been extensively studied
by many authors, see in particular \cite{Milnor} and \cite{Be}. We
can then state the following.

\begin{proposition}\label{luca3}
A left invariant soliton or breather structure on a solvable Lie group is necessarily expanding.
\end{proposition}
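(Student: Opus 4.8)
The plan is to read off the type of the structure directly from the sign of its scalar curvature. Since a left invariant metric is homogeneous, its scalar curvature is a single constant $R_{0}$, so we are automatically in the constant scalar curvature regime to which Lemmas \ref{divergence}, \ref{segnoc} and Theorem \ref{luca2} apply. The decisive geometric input is the classical sign theorem of Milnor \cite{Milnor} (see also \cite{Lue}): a left invariant metric on a solvable Lie group has nonpositive scalar curvature, with $R_{0}=0$ occurring only in the flat case. Discarding the flat (trivial) structure, I may therefore assume $R_{0}<0$, and everything reduces to converting this inequality into the statement $\lambda>0$.

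With $R_{0}<0$ in hand I would eliminate the shrinking and steady regimes in turn. By Lemma \ref{segnoc} a shrinking constant scalar curvature soliton must have $R_{0}>0$, which is incompatible with $R_{0}<0$; hence the structure is not shrinking. For the steady case I would invoke the monotonicity identity $\frac{dR(t)}{dt}=2\left|Ric\right|^{2}$ underlying Theorem \ref{luca2}: a steady soliton has $\sigma(t)\equiv 1$, so by \eqref{scalar equation} its scalar curvature is constant in time, forcing $\left|Ric\right|^{2}\equiv 0$ and thus $R_{0}=0$, again contradicting $R_{0}<0$. The only surviving possibility is $\lambda>0$, in agreement with Lemma \ref{segnoc}.

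For the breather assertion the same sign obstruction does the work, now read through the monotonicity of $R$ rather than through the soliton ODE. Over one period a breather rescales the metric by a factor $\alpha$, so the (constant) scalar curvature is multiplied by $\alpha^{-1}$; a shrinking period $\alpha<1$ applied to $R_{0}<0$ would make $R$ strictly more negative, violating $\frac{dR(t)}{dt}=2\left|Ric\right|^{2}\geq 0$, while the steady case is excluded by the breather extensions of Theorems \ref{luca1} and \ref{luca2} recorded in the remark above. Thus a breather of this kind is forced into the expanding regime as well.

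The step requiring the most care is not the soliton bookkeeping but the correct invocation of Milnor's theorem: one must cite the sharp form giving strict negativity of $R_{0}$ off the flat locus, so that the nontrivial case genuinely satisfies $R_{0}<0$. A secondary point worth recording is that a compact connected solvable group is a flat torus, so that outside the trivial flat case one is always in the noncompact homogeneous setting where the monotonicity results of the previous section are available; this guarantees that the steady exclusion, for both solitons and breathers, is legitimate.
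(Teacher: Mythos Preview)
Your argument is correct and follows the same route as the paper: invoke Milnor's sign theorem to force $R_{0}\le 0$ with equality only in the flat case, and then read off $\lambda>0$ from the results of Section~2. In fact you are more careful than the paper's own proof, which cites only Lemma~\ref{segnoc} and leaves the exclusion of the steady case and the breather assertion implicit; your explicit handling of these via Theorem~\ref{luca2} and the monotonicity of $R$ fills that gap cleanly.
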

\begin{proof}
As proved in \cite{Milnor}, any left invariant metric $g$ over a
solvable Lie group $G$ is either flat or has strictly negative
scalar curvature. If the scalar curvature negative, by Lemma
\ref{segnoc} we have that the left invariant soliton structure (if
any) must be of expanding type.
\end{proof}

This simple result explains why the examples explicitly constructed
by Baird, Danielo and Lott are of expanding type. Moreover it
ensures that every nilsoliton predicted by Lauret must be of
expanding type. The above lemma can actually be improved. In
\cite{Milnor}, Milnor studies the problem of which Lie groups
endowed with left invariant metric admit positive scalar curvature.
Using the Iwasawa decomposition theorem Wallach was able to give a
sufficient condition, namely that the universal covering of the Lie
group is not homeomorphic to an Euclidean space, for the details see
\cite{Milnor}. In \cite{Milnor} it is actually conjectured that this
condition is also necessary. This conjecture turned to be true as
shown by B\'erard-Bergery in \cite{Be}. We then have that a Lie
group whose universal cover is homeomorphic to a Euclidean space can
admit only expanding left invariant soliton structures. We also have
that a non-compact shrinking left invariant soliton must be
homeomorphic to a product of a compact Lie group with some Euclidean
space.

\begin{remark}\label{Naber}
It follows from results of Naber \cite{Naber} and Petersen-Wylie
\cite{Wylie} that any shrinking homogeneous Ricci soliton must be
trivial. More precisely, it must be isometric to a product of a
compact positive Einstein manifold with some Euclidean space. In
fact, by Theorem 1.2. in \cite{Naber} a shrinking homogeneous
soliton must be of gradient type. Finally, by Theorem 1.1. in
\cite{Wylie} such a soliton must be trivial. For more details see
also Remark \ref{yeah}.
\end{remark}

\subsection{Nonsolvability for left invariant vector fields}

We derive some generalities about the Ricci soliton equation over a
metric $g$ that is left invariant. Recall that a metric $g$ is
called left invariant if $L^{*}_{h}g=g$ for all $h\in G$ where
$L_{h}$ is the left translation by $h$. Recall also that a vector
field $X$ is called left invariant iff $L^{*}_{h}X=X$ for all $h\in
G$. Consider now the Ricci soliton equation \eqref{lie soliton} and
take the pull back by a left translation, we then get
\begin{align}\notag
&-2L^{*}_{h}Ric_{g}=L^{*}_{h}(L_{X}g)+2\lambda L^{*}_{h}g \\ \notag
&-2Ric_{L^{*}_{h}g}=L_{L^{*}_{h}X}(L^{*}_{h}g)+2\lambda g \\ \notag
&-2Ric_{g}=L_{L^{*}_{h}X}g+2\lambda g,
\end{align}
which implies \bdism L_{(L^{*}_{h}X-X)}g=0. \edism We conclude that
$L^{*}_{h}X-X$ is a Killing vector field for all $h\in G$, we then
say that $X$ is left invariant modulo Killing fields or simply a
\textit{left-Killing} field. Following this terminology a trivial
left-Killing field is just an ordinary left invariant vector field.

At this stage of the theory we cannot be sure of the existence of
nontrivial left-Killing fields, indeed one may think to consider the
usual left invariant vector fields in order to reduce the soliton
condition to an algebraic set of equations on the Lie algebra of the
group. It turns out that this is not possible in general.

\begin{lemma}\label{unimodular}
Any left invariant vector field over an unimodular Lie group is divergence free.
\end{lemma}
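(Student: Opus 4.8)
The plan is to compute $div(X)$ explicitly for a left invariant field $X$ and to identify it with a purely Lie-algebraic quantity that vanishes exactly in the unimodular case. I would start from the characterization of the divergence through the Riemannian volume form $dV_g$, namely $L_X(dV_g)=div(X)\,dV_g$. The crucial observation is that for a left invariant metric the volume form $dV_g$ is itself a left invariant $n$-form, i.e. a left Haar measure on $G$.

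Next, recall that the flow of a left invariant vector field $X$ is the one-parameter family of right translations $R_{\exp(tX)}$, since $\frac{d}{dt}\big|_{t=0}\,h\exp(tX)=X_h$. Hence $L_X(dV_g)=\frac{d}{dt}\big|_{t=0}\,(R_{\exp(tX)})^*dV_g$, and the pullback of a left Haar measure under right translation is governed precisely by the modular function $\Delta$, via $(R_a)^*dV_g=\Delta(a)\,dV_g$. Differentiating at the identity and using $\Delta(\exp(tX))=\det(\mathrm{Ad}_{\exp(tX)})=e^{t\,\mathrm{tr}(\mathrm{ad}_X)}$ gives $div(X)=\mathrm{tr}(\mathrm{ad}_X)$, up to a sign fixed by the orientation convention. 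I would then invoke the standard criterion that a connected Lie group $G$ is unimodular if and only if $\mathrm{tr}(\mathrm{ad}_X)=0$ for every $X$ in its Lie algebra (equivalently $\Delta\equiv1$), which immediately yields $div(X)=0$ for all left invariant $X$, as claimed.

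A second, purely Riemannian route avoids the modular function altogether and may be preferable for a self-contained write-up. Fixing a left invariant orthonormal frame $\{e_i\}$ one has $div(X)=\sum_i g(\nabla_{e_i}X,e_i)$. Applying the Koszul formula to left invariant fields — where all the directional-derivative terms drop out because $g$ is constant on such fields — reduces each summand to $g([e_i,X],e_i)$; summing and using $[e_i,X]=-\mathrm{ad}_X(e_i)$ gives $div(X)=-\mathrm{tr}(\mathrm{ad}_X)$ directly, after which the same unimodularity criterion finishes the argument.

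I expect the only genuine subtlety to be bookkeeping rather than conceptual difficulty: correctly tracking signs and conventions in the Koszul (or modular-function) computation, and citing the precise form of the unimodularity criterion relating $\Delta\equiv 1$ to $\mathrm{tr}(\mathrm{ad}_X)\equiv 0$, as recorded in Milnor's paper. The geometric content itself is short, and either computation collapses the divergence to the trace of the adjoint representation, which is exactly the invariant whose vanishing defines unimodularity.
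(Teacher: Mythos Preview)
Your proposal is correct, and your first route is essentially the paper's argument: the paper also identifies $dV_g$ with a left Haar form, writes the flow of $X$ as right translations $R_{\exp(tX)}$, and uses $L_X dV_g = div(X)\,dV_g$. The only difference is that the paper short-circuits your modular-function step: instead of computing $(R_a)^*dV_g = \Delta(a)\,dV_g$ and differentiating to get $div(X)=\mathrm{tr}(\mathrm{ad}_X)$, it simply invokes the \emph{definition} of unimodularity (left Haar $=$ right Haar) to conclude $L_X dV_g = 0$ immediately. Your version is slightly longer but yields the explicit formula $div(X)=-\mathrm{tr}(\mathrm{ad}_X)$ as a bonus.

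Your second, Koszul-based route is a genuinely different computation that the paper does not carry out (the paper only uses the frame expression $div(X)=\sum_i g(\nabla_{e_i}X,e_i)$ to observe that the divergence is constant, not to evaluate it). This route is self-contained and avoids any appeal to Haar measure or the flow of $X$; the trade-off is that it requires the Lie-algebraic characterization of unimodularity ($\mathrm{tr}(\mathrm{ad}_X)\equiv 0$), whereas the paper's argument uses the measure-theoretic definition directly.
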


\begin{proof}
First, we notice that any left invariant vector field has constant
divergence. Let $X$ be a left invariant vector field over a Lie
group $ G^{n}$ equipped with a left invariant metric $g$ and let
$\left\lbrace e_{i}\right\rbrace^{n}_{i=1}$ be a left invariant
global orthonormal frame. We can then compute the divergence as
follows \bdism div(X)=\sum_{i}g(\nabla_{e_{i}}X, e_{i}) \edism and
notice that is constant since the $\left\lbrace
\nabla_{e_{i}}X\right\rbrace $ are left invariant. Recall now that a
Lie group is called unimodular if its left invariant Haar measure
$\omega$ is also right invariant.  Recall also that given a left
invariant vector field $X$ its flow can be explicitly written in
terms of right translations and the exponential map, namely for any
$x\in G^{n}$ we have \bdism F_{t}(x)=R_{\rm exp(tX)}(x). \edism Let
$d\omega$ the left invariant volume form associated to the Haar
measure. Since we are assuming $d\omega$ to be also right invariant
we clearly have $L_{X}d\omega=0$. We conclude recalling the identity
$L_{X}d\omega=div(X)d\omega$.
\end{proof}

We can now combine the above result with Lemma \ref{divergence} to
obtain the following proposition.

\begin{proposition}\label{nonsolvability}
For any unimodular Lie group the homogeneous soliton equation cannot
be solved within the set of left invariant vector fields.
\end{proposition}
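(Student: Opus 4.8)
The plan is to combine the two preceding results directly. Proposition \ref{nonsolvability} asserts that on a unimodular Lie group the soliton equation cannot be solved using only left invariant vector fields, and the two tools I have in hand are Lemma \ref{divergence} and Lemma \ref{unimodular}. So I would argue by contradiction: suppose a nontrivial left invariant Ricci soliton structure exists on a unimodular Lie group $G^{n}$, with the associated vector field $X$ lying in the set of left invariant vector fields.

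First I would recall what ``nontrivial'' must mean here and make sure the constant scalar curvature hypothesis of Lemma \ref{divergence} is in force. A left invariant metric on a Lie group is homogeneous, hence automatically has constant scalar curvature, so any left invariant soliton is a constant scalar curvature soliton; this is exactly the setting in which Lemma \ref{divergence} applies. Thus the vector field $X$ of a nontrivial left invariant soliton must satisfy $\operatorname{div}(X)=k$ for some nonzero constant $k$.

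Next I would invoke Lemma \ref{unimodular}: on a unimodular Lie group, every left invariant vector field is divergence free, i.e.\ $\operatorname{div}(X)=0$. If the soliton vector field $X$ were genuinely left invariant, these two facts would force $0=k\neq 0$, a contradiction. Hence no left invariant $X$ can solve \eqref{lie soliton} nontrivially, which is precisely the assertion of the proposition.

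I do not expect a serious obstacle here, since the proposition is essentially the conjunction of the two lemmas; the only point requiring a little care is the logical scope. The statement rules out solving the equation \emph{within the set of left invariant vector fields}, and it is important to stress that this does not preclude a solution by a more general \emph{left-Killing} field (a field left invariant only modulo Killing fields), since such a field need not have the constant divergence computed in Lemma \ref{unimodular}. The cleanest way to present this is to note that the argument only fails to extend precisely because the divergence computation in Lemma \ref{unimodular} uses left invariance of the frame; for a left-Killing field the Killing correction term contributes zero divergence but breaks the left invariance of $\{\nabla_{e_i}X\}$, so the two lemmas no longer collide. I would therefore state the proof succinctly and append this one clarifying sentence about the scope, to forestall the natural objection that the whole soliton theory on unimodular groups has been killed off.
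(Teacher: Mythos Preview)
Your argument is correct and is exactly the paper's approach: combine Lemma~\ref{unimodular} (left invariant fields on unimodular groups are divergence free) with Lemma~\ref{divergence} (a nontrivial constant scalar curvature soliton has nonzero constant divergence), the constant scalar curvature hypothesis being automatic for left invariant metrics. The clarifying remark on left-Killing fields is consistent with the paper's surrounding discussion, though the paper itself does not include it in the proof.
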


Note that Proposition \ref{nonsolvability} implies the existence of
nontrivial left-Killing fields. Recall that, in terms of the Lie
algebra $\mathfrak{g}$, the unimodular condition is equivalent to
requiring that the linear transformation $ad\;x$ has trace zero for
every $x\in\mathfrak{g}$. We conclude that any nilpotent Lie group
is unimodular which implies, by Proposition \ref{nonsolvability},
the existence of at least one complete nontrivial left-Killing field
on any nilsoliton.

We now briefly study the same question on non-unimodular Lie groups.
Notice that on a non-unimodular Lie group the divergence of a left
invariant vector field can be different from zero. Let
$(\mathfrak{g},\left\langle, \right\rangle )$ be a three dimensional
non-unimodular metric Lie algebra. As shown in Section 6 of
\cite{Milnor}, we can always find an orthonormal basis $e_{1}$,
$e_{2}$, $e_{3}$ so that
\begin{align}\notag
&\left[e_{1}, e_{2}\right]=\alpha e_{2}+\beta e_{3} \\ \notag
&\left[e_{1}, e_{3}\right]=\gamma e_{2}+\delta e_{3} \\ \notag
\end{align}
with $\left[e_{2}, e_{3}\right]=0$, $\alpha+\delta\neq 0$ and
$\alpha\gamma+\beta\delta=0$. Following the literature we refer to
this special frame as a Milnor frame (M-frame). The divergence of
the elements in the M-frame is easily computed:
\begin{align}\notag
div(e_{1})&=g(\nabla_{e_{1}}e_{1},e_{1})+g(\nabla_{e_{2}}e_{1},e_{2})+g(\nabla_{e_{3}}e_{1},e_{3}) \\ \notag
&=0+\frac{1}{2}\left\lbrace g(\left[e_{2}, e_{1}\right], e_{2})-g(\left[e_{1}, e_{2}\right], e_{2})+g(\left[e_{2}, e_{2}\right], e_{1})  \right\rbrace \\ \notag
&+\frac{1}{2}\left\lbrace g(\left[e_{3}, e_{1}\right], e_{3})-g(\left[e_{1}, e_{3}\right], e_{3})+g(\left[e_{3}, e_{3}\right], e_{1})  \right\rbrace \\ \notag
&=-(\alpha+\delta);
\end{align}
an analogous compututation shows that $div(e_{2})=div(e_{3})=0$. In
summary for any orthonormal M-frame $e_{1}$ has nonzero divergence
while $e_{2}$ and $e_{3}$ are divergence free. Assume now the
soliton equation can be solved within the set of left invariant
vector fields. Let $(\mathfrak{g},\left\langle , \right\rangle )$
our soliton metric Lie algebra, and let $e_{1}$, $e_{2}$, $e_{3}$
the associated M-frame. Notice that the left invariant vector field
that solve the soliton condition must have a nonzero component in
the direction of $e_{1}$. Using Lemma 6.5. in \cite{Milnor}, it is
easy to express the Ricci tensor in terms of the constants of
structure of the Lie algebra. Using this fact and performing
analogous computations for the Lie derivative of the metric we
reduce the soliton condition $-2Ric_{g}=2\lambda g+L_{X}g$, where
$X=ae_{1}+be_{2}+ce_{3}$ with $a\neq 0$, to the following set of
algebraic equations
\begin{gather}
-2\left( \begin{array}{ccc}
 -\alpha^{2}-\delta^{2}-\frac{1}{2}(\beta+\gamma)^{2}& 0 & 0 \\
0 & -\alpha(\alpha+\delta)+\frac{1}{2}(\gamma^{2}-\beta^{2})& 0 \\
0 & 0 & -\delta(\alpha+\delta)+\frac{1}{2}(\beta^{2}-\gamma^{2})
\end{array} \right) \nonumber \\
= 2\lambda
\left( \begin{array}{ccc}
1 & 0 & 0 \\
0 & 1 & 0 \\
0 & 0 &1
\end{array} \right) +a
\left( \begin{array}{ccc}
0 & 0 & 0 \\
0 & -2\alpha & -(\gamma+\beta) \\
0 & -(\gamma+\beta) &-2\delta
\end{array} \right)\nonumber \\ +\left( \begin{array}{ccc}
0 & b\alpha+c\delta & b\beta+c\delta \\
b\alpha+c\delta & 0 & 0 \\
b\beta+c\delta & 0 & 0
\end{array} \right) .
\nonumber
\end{gather}
This system is easily reduced to
\begin{align}\notag
&2(\alpha^{2}+\delta^{2})=2\lambda \\ \notag
&2\alpha(\alpha+\delta)=2\lambda-2a\alpha \\ \notag
&2\delta(\alpha+\delta)=2\lambda-2a\delta.
\end{align}
Thus, $\lambda=\alpha^{2}+\delta^{2}$ which implies, under the
assumption $\delta\neq \alpha$, $a=-(\alpha+\delta)$. Substituting
this value in the second of the equations above we obtain
$\delta=\alpha=0$ that is a contradiction. In the remaining case
$\delta=\alpha$ an easy computation shows that $a=0$ which clearly
implies the triviality of the soliton. In summary we proved the
following proposition.

\begin{proposition}
For any three dimensional non-unimodular Lie algebra the soliton
equation cannot be solved within the set of left invariant vector
fields.
\end{proposition}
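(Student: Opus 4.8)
The plan is to reduce the soliton equation to a finite algebraic system in the structure constants and show that no admissible solution exists. The decisive structural input comes for free from homogeneity: a left invariant metric has constant scalar curvature, so by Lemma \ref{divergence} the vector field $X$ of any \emph{nontrivial} soliton must have nonzero constant divergence. Writing $X=ae_{1}+be_{2}+ce_{3}$ in the M-frame and using the divergence values recorded above ($div(e_{1})=-(\alpha+\delta)$, $div(e_{2})=div(e_{3})=0$), one gets $div(X)=-a(\alpha+\delta)$. Since nonunimodularity gives $\alpha+\delta\neq0$, nonzero divergence forces $a\neq0$. This is the only place the hypotheses are really used, and it is what makes the argument work: the $e_{1}$-coefficient of a would-be left invariant soliton field cannot vanish.

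Next I would make the equation explicit. Using Milnor's Lemma~6.5 in \cite{Milnor} I would write $Ric_{g}$ as a diagonal matrix in the M-frame, and compute $L_{X}g$ directly from the connection coefficients and the bracket relations. Substituting into $-2Ric_{g}=2\lambda g+L_{X}g$ produces a $3\times3$ matrix identity, whose three diagonal entries collapse to
\begin{align}\notag
&2(\alpha^{2}+\delta^{2})=2\lambda, \\ \notag
&2\alpha(\alpha+\delta)=2\lambda-2a\alpha, \\ \notag
&2\delta(\alpha+\delta)=2\lambda-2a\delta.
\end{align}
I expect these diagonal relations alone to suffice; the off-diagonal entries only serve to pin down $b$ and $c$ in terms of $\beta,\gamma$ and need not be solved to reach the contradiction.

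Finally I would run the case analysis driven by $a\neq0$. The first relation gives $\lambda=\alpha^{2}+\delta^{2}$. If $\alpha\neq\delta$, subtracting the last two relations and cancelling the nonzero factor $\alpha-\delta$ yields $a=-(\alpha+\delta)$; feeding this back into the second relation collapses it to $\alpha^{2}+\delta^{2}=0$, i.e. $\alpha=\delta=0$, contradicting $\alpha+\delta\neq0$. If instead $\alpha=\delta$, the second relation reduces to $0=-2a\alpha$, and since $\alpha+\delta=2\alpha\neq0$ we must have $a=0$, forcing $X$ to be divergence free and the soliton to be trivial. Either branch is a contradiction. The only genuine obstacle is bookkeeping: computing $Ric_{g}$ and $L_{X}g$ correctly in the M-frame so that the diagonal reduction above comes out clean. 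Once $a\neq0$ is secured, the remaining algebra is completely forced.
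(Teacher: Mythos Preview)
Your approach is essentially identical to the paper's: the same M-frame setup, the same divergence argument forcing $a\neq 0$, the same reduced diagonal system, and the same case split on $\alpha\neq\delta$ versus $\alpha=\delta$. One small correction to your expectations about the bookkeeping: the off-diagonal entries do a bit more than pin down $b$ and $c$. The $(2,3)$-entry of $L_{X}g$ is $-a(\beta+\gamma)$, and since $Ric_{g}$ is diagonal in the M-frame this forces $\beta+\gamma=0$ (using $a\neq 0$); that is precisely what kills the $(\beta+\gamma)^{2}$ and $\gamma^{2}-\beta^{2}$ terms in the raw diagonal Ricci entries and yields the clean three-line system you wrote down.
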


We don't know if a similar restriction holds in dimension greater
than three.

\subsection{Left invariant gradient solitons}

As mentioned in the introduction, all the known explicit
constructions of left invariant Ricci solitons are of non-gradient
type. Remarkably these are the first examples of non-gradient Ricci
solitons. We now show that this property is indeed generic for this
class of solitons.

Recall that the gradient Ricci soliton equation is given by
\begin{align}\label{gradient ricci}
Ric_{g}+\nabla\nabla f+\dfrac{\lambda}{2}g=0.
\end{align}
We then say that $g$ is an expanding, shrinking, or steady soliton if $\lambda>0$, $\lambda<0$ or $\lambda=0$ respectively.
By standard manipulations in \eqref{gradient ricci} we can derive the following useful identities
\begin{align}\label{trace1}
R+\Delta f+\dfrac{\lambda}{2}n=0
\end{align}
\begin{align}\label{trace2}
dR=2Ric(\nabla f,\cdot)
\end{align}
\begin{align}\label{trace3}
\Delta R=g(\nabla R,\nabla f)-\lambda R-2\left|Ric\right|^{2},
\end{align}
for a proof see \cite{Chow1}. Using identity \eqref{trace1}, we
notice that if $R$ is constant then $\Delta f$ is constant. We can
then derive another proof of theorem \ref{luca1}. In fact, by
Perelman's result on compact Ricci solitons we can restrict our
attention to shrinking gradient solitons. Thus, using the
nonexistence of sub/superharmonic functions on compact manifolds we
can rule out again the constant scalar curvature case. Combining now
identities \eqref{trace1} and \eqref{trace3}, we have
\begin{align}
\dfrac{2}{n}(R+\Delta f)R=2\left|Ric\right|^{2}\notag
\end{align}
that implies
\begin{align}
\dfrac{2}{n}\Delta f R=2\left|Ric-\dfrac{R}{n}g\right|^{2}.
\end{align}
We conclude that on a gradient shrinking soliton with constant
scalar curvature $\Delta f$ and $R$ have the same sign. Finally,
using \eqref{trace2} we can derive a nice rigidity property for
constant scalar curvature solitons. Indeed, using the identity
\begin{align}\label{bochner}
0=dR=2Ric(\nabla f,\cdot),
\end{align}
we have that if the Ricci curvature has definite sign then the
soliton must necessarily be Einstein. More precisely we can state
the following:

\begin{proposition}\label{degenerate}
A Riemannian manifold with constant scalar curvature and non degenerate Ricci tensor cannot be a gradient Ricci soliton.
\end{proposition}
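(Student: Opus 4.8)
The plan is to read the conclusion directly off the identity \eqref{bochner}, which has just been established. Since the scalar curvature $R$ is assumed constant, its differential vanishes, so $dR=0$. Substituting this into \eqref{trace2} gives at once
\[
Ric(\nabla f,\,\cdot\,)=0
\]
as a $1$-form on the manifold; this is exactly the content of \eqref{bochner}. The geometric meaning is that at every point $p$ the gradient vector $\nabla f|_{p}$ lies in the kernel of the symmetric bilinear form $Ric_{p}$ acting on the tangent space $T_{p}M^{n}$. So the first step is simply to observe that constancy of $R$ localizes the problem to a pointwise kernel condition.

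Next I would invoke the non-degeneracy hypothesis. To say that the Ricci tensor is non-degenerate means precisely that, at each point, the associated endomorphism $v\mapsto Ric_{p}(v,\,\cdot\,)$ is injective, equivalently that its only null vector is zero. Applying this to $\nabla f|_{p}$, the relation $Ric_{p}(\nabla f|_{p},\,\cdot\,)=0$ forces $\nabla f|_{p}=0$. Since $p$ is arbitrary, $\nabla f\equiv 0$ on $M^{n}$, and in particular the Hessian $\nabla\nabla f$ vanishes identically. This is the heart of the argument, and it is pure linear algebra once \eqref{bochner} is in hand.

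Finally, feeding $\nabla\nabla f=0$ back into the gradient soliton equation \eqref{gradient ricci} yields
\[
Ric_{g}=-\dfrac{\lambda}{2}\,g,
\]
so $(M^{n},g)$ is Einstein and the soliton structure is trivial. I do not expect any serious obstacle here: the only point requiring care is interpretive rather than technical, namely that ``non-degenerate'' is read as invertibility of the Ricci endomorphism at each point, and that an Einstein metric is regarded as a trivial (non-genuine) gradient soliton, so that ``cannot be a gradient Ricci soliton'' is understood in the sense that no non-Einstein gradient soliton structure is admissible under the stated hypotheses. This is the natural strengthening of the observation, made just before the statement, that a definite-sign Ricci tensor already forces the soliton to be Einstein.
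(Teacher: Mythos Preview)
Your argument is correct and matches the paper's own reasoning exactly: from constancy of $R$ and identity \eqref{trace2} one obtains \eqref{bochner}, and non-degeneracy of $Ric$ then forces $\nabla f\equiv 0$, so the soliton is Einstein and hence trivial. There is nothing to add.
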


We conclude our study showing that Proposition \ref{degenerate} can
actually be improved if we restrict to left invariant solitons.
Formula \eqref{bochner} combined with the Bochner identity for
$\left|\nabla f\right|^{2}$ gives
\begin{align}\notag
\Delta\left|\nabla f\right|^{2}&=2\left|\nabla \nabla f\right|^{2}+2Ric(\nabla f,\nabla f)+2g(\nabla f,\nabla\Delta f) \\ \notag
&=2\left|\nabla \nabla f\right|^{2},
\end{align}
then if we further assume $\left|\nabla f\right|^{2}=c$, with $c$ a constant, we derive that the soliton is trivial.
Thus, since
\begin{align}
L^{*}_{h}\nabla_{g}f=\nabla_{L^{*}_{h}g}(f\circ L_{h})=\nabla_{g}(f\circ L_{h}),
\end{align}
we have that for at least some $h\in G$ the vector field
$\nabla_{g}(f\circ L_{h}-f)$ is a nontrivial Killing field. Now, a
gradient Killing field must be parallel. We conclude that our group
splits locally as a Riemannian product with an interval. The simple
observation that a gradient soliton structure on Lie group lifts to
its universal Riemannian cover immediately implies the following
theorem.

\begin{theorem}\label{irreducible}
Any metric Lie algebra $(\mathfrak{g},\,\left\langle, \right\rangle )$ with trivial Euclidean de Rham factor cannot
 be a gradient Ricci soliton.
\end{theorem}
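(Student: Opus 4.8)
The plan is to exploit that a left invariant metric is homogeneous, so its scalar curvature $R$ is constant, and to feed this into the gradient soliton identities \eqref{trace1}--\eqref{bochner} already at hand. First I would observe that $dR=0$ reduces \eqref{trace2} to $Ric(\nabla f,\;)=0$, which is exactly \eqref{bochner}, while \eqref{trace1} simultaneously forces $\Delta f$ to be constant. Inserting $Ric(\nabla f,\nabla f)=0$ and $\nabla\Delta f=0$ into the Bochner formula for $|\nabla f|^{2}$ then yields $\Delta|\nabla f|^{2}=2|\nabla\nabla f|^{2}$, the identity recorded just before the statement. Its useful consequence is that whenever $|\nabla f|^{2}$ is constant one gets $\nabla\nabla f=0$, and \eqref{gradient ricci} makes the metric Einstein, i.e.\ the soliton trivial.

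Next I would apply the left--Killing computation established above to the field $X=\nabla f$. Since $\nabla f$ is in particular a solution of \eqref{lie soliton} and $g$ is left invariant, $L^{*}_{h}\nabla f-\nabla f$ is a Killing field for every $h\in G$; using $L^{*}_{h}\nabla_{g}f=\nabla_{g}(f\circ L_{h})$ this field equals $\nabla_{g}(f\circ L_{h}-f)$, a \emph{gradient} Killing field. A gradient Killing field is parallel, since its covariant derivative is simultaneously the symmetric Hessian and the antisymmetric Killing endomorphism and must therefore vanish. Hence each $\nabla_{g}(f\circ L_{h}-f)$ is parallel.

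The argument then closes by a dichotomy. If $\nabla_{g}(f\circ L_{h}-f)\neq 0$ for some $h$, it is a nonzero parallel vector field; lifting to the simply connected universal Riemannian cover and applying the de Rham decomposition theorem, it splits off a flat line and produces a nontrivial Euclidean de Rham factor, against the hypothesis. If instead $\nabla_{g}(f\circ L_{h}-f)=0$ for all $h$, then $\nabla f$ is left invariant, hence of constant length; by the identity of the first paragraph $\nabla\nabla f=0$, so $\nabla f$ is itself parallel and the same de Rham argument gives a Euclidean factor unless $\nabla f\equiv 0$, which is the excluded Einstein case. Either branch shows that a nontrivial left invariant gradient soliton must carry a nontrivial Euclidean de Rham factor.

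The step I expect to be the main obstacle is the dichotomy, and specifically the guarantee that one of the Killing fields $\nabla_{g}(f\circ L_{h}-f)$ is genuinely nonzero: this amounts to excluding the degenerate possibility that $\nabla f$ is left invariant, which is precisely where the constant-length identity $\Delta|\nabla f|^{2}=2|\nabla\nabla f|^{2}$ is indispensable. A secondary technical point is passing to the universal cover so that the de Rham splitting is global rather than local; this is harmless because, as already noted, a gradient soliton structure on a Lie group lifts to its universal Riemannian cover.
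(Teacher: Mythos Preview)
Your proposal is correct and matches the paper's own proof essentially step for step: the paper also derives $\Delta|\nabla f|^{2}=2|\nabla\nabla f|^{2}$ from constant scalar curvature and Bochner, observes that each $\nabla_{g}(f\circ L_{h}-f)$ is a gradient (hence parallel) Killing field, and concludes via the same dichotomy and passage to the universal cover. Your handling of the second branch of the dichotomy is slightly more explicit than the paper's, but the substance is identical.
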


\begin{remark}\label{yeah}
It follows from a rigidity result of Petersen-Wylie \cite{Wylie}
that any homogeneous gradient Ricci soliton must have a non-trivial
Euclidean deRham factor. The reader should compare Theorem
\ref{irreducible} with the more general Theorem 1.1. in
\cite{Wylie}. The first version of this work and the preprint of
\cite{Wylie} appeared independently on the arXiv in October 2007.
\end{remark}

We can now apply Proposition \ref{degenerate} to the class of
nonsingular 2-step nilpotent Lie algebras.

\begin{theorem}\label{nonsingular}
A nonsingular 2-step nilpotent Lie algebra cannot admit a left invariant gradient soliton structure.
\end{theorem}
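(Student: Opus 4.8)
The plan is to verify that a nonsingular $2$-step nilpotent metric Lie algebra satisfies the two hypotheses of Proposition \ref{degenerate} and then invoke it. Since a left invariant metric is homogeneous, its scalar curvature is automatically constant, so the entire content of the argument is to show that the Ricci tensor of such a soliton candidate is \emph{non-degenerate}. Once this is established, Proposition \ref{degenerate} forces any gradient relation $Ric_{g}+\nabla\nabla f+\frac{\lambda}{2}g=0$ to have $\nabla f=0$, so the metric would be Einstein; I will also observe that the Ricci operator has mixed signature and therefore cannot be Einstein, which yields the desired nonexistence.

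First I would recall the algebraic picture. Write $\mathfrak{n}=\mathfrak{v}\oplus\mathfrak{z}$, where $\mathfrak{z}$ is the center and $\mathfrak{v}=\mathfrak{z}^{\perp}$, and for each $Z\in\mathfrak{z}$ introduce the skew-symmetric map $j(Z)\colon\mathfrak{v}\to\mathfrak{v}$ defined by $\left\langle j(Z)X,Y\right\rangle=\left\langle Z,[X,Y]\right\rangle$. In this notation the Ricci operator of the left invariant metric is block diagonal with respect to the splitting $\mathfrak{v}\oplus\mathfrak{z}$, with $Ric|_{\mathfrak{v}}=\tfrac{1}{2}\sum_{\alpha}j(Z_{\alpha})^{2}$ for an orthonormal basis $\{Z_{\alpha}\}$ of $\mathfrak{z}$, and $Ric(Z,Z)=\tfrac{1}{4}\,\|j(Z)\|^{2}$ on $\mathfrak{z}$. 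Reducing the problem to this standard formula is the first concrete step; from it one reads off that $Ric|_{\mathfrak{v}}\le 0$ (a sum of squares of skew operators) while $Ric|_{\mathfrak{z}}\ge 0$.

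Next I would establish definiteness of each block. On $\mathfrak{v}$ the common kernel $\bigcap_{\alpha}\ker j(Z_{\alpha})$ consists of those $X\in\mathfrak{v}$ with $[X,\mathfrak{v}]\perp\mathfrak{z}$; since $[\mathfrak{n},\mathfrak{n}]\subseteq\mathfrak{z}$ this means $[X,\mathfrak{v}]=0$, so $X$ is central and hence $X=0$. Thus $Ric|_{\mathfrak{v}}$ is already negative definite for \emph{any} $2$-step nilpotent algebra. The nonsingularity hypothesis is exactly what I would use to control the center: since $ad_{X}\colon\mathfrak{n}\to\mathfrak{z}$ is surjective for some (indeed every) noncentral $X$, we get $[\mathfrak{n},\mathfrak{n}]=\mathfrak{z}$, whence $j(Z)\neq 0$ for every $Z\in\mathfrak{z}\setminus\{0\}$ (otherwise $Z\perp[\mathfrak{n},\mathfrak{n}]=\mathfrak{z}$ would force $Z=0$). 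Therefore $Ric|_{\mathfrak{z}}$ is positive definite, the two blocks together make $Ric$ non-degenerate, and Proposition \ref{degenerate} applies.

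The main obstacle is not conceptual but bookkeeping: correctly deriving the block-diagonal Ricci formula above with the right signs (starting from $\nabla_{X}Y=\tfrac12[X,Y]$ and $\nabla_{X}Z=\nabla_{Z}X=-\tfrac12 j(Z)X$), and cleanly translating the nonsingularity condition into the single statement that every nonzero central direction is reached by the bracket. I would also record the small but necessary remark that, because $Ric$ is negative definite on $\mathfrak{v}$ and positive definite on $\mathfrak{z}$, it has mixed signature and so the metric is in no case Einstein; this closes the only logical gap in applying Proposition \ref{degenerate}, namely excluding the trivial solution $\nabla f=0$.
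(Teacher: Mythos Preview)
Your argument is correct and follows essentially the same route as the paper: set up the $j(Z)$-maps on a $2$-step nilpotent metric Lie algebra, use them to show that the Ricci operator is nondegenerate, and then invoke the constant-scalar-curvature obstruction. The only cosmetic differences are that the paper cites Eberlein for the description of $\ker Ric$ rather than writing out the block formula, and it concludes via Theorem~\ref{irreducible} (trivial Euclidean de Rham factor) instead of Proposition~\ref{degenerate}; since a nondegenerate Ricci tensor forbids parallel vector fields, these two endings are equivalent, and your direct appeal to Proposition~\ref{degenerate} together with the mixed-signature remark excluding the Einstein case is perfectly adequate.
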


\begin{proof}
Recall that a 2-step nilpotent Lie algebra $\mathfrak{n}$ is called
nonsingular if the map $ad\;x:\mathfrak{n}\rightarrow \mathfrak{z}$
is surjective for all $x \in\mathfrak{n}-\mathfrak{z}$, where
$\mathfrak{z}$ is the center of $\mathfrak{n}$.  Now, on a general
2-step nilpotent Lie algebra equipped with a positive definite inner
product $\left\langle , \right\rangle$, denoted by $\mathfrak{v}$
the orthogonal complement of $\mathfrak{z}$ in $\mathfrak{n}$, we
can define for each $z\in\mathfrak{z}$ a skew symmetric linear
transformation $j(z):\mathfrak{v}\rightarrow\mathfrak{v}$ by \bdism
j(z)x=(ad\;x)^{*}z \; {\rm for}\; {\rm all} \;x\in\mathfrak{v}
\edism where $(ad\;x)^{*}$ denotes the adjoint of $ad\;x$. As
extensively shown by many authors, see in particular \cite{Eber} and
\cite{Tricerri}, most of the geometry of the metric Lie algebra
$\left\lbrace\mathfrak{n}, \left\langle , \right\rangle\right\rbrace
$ is given by the properties of the maps $j(z)$. In particular the
kernel of the Ricci tensor, seen as a symmetric linear
transformation on the Lie algebra, is given by the following
subspace of the center $\left\lbrace z\in \mathfrak{v}\;\lvert\;
j(z)=0\right\rbrace $, see Proposition 2.5 in \cite{Eber}. Being
$\mathfrak{n}$ nonsingular, an easy argument shows that
$j(z):\mathfrak{v}\rightarrow\mathfrak{v}$ is nonsingular for any
nonzero $z\in\mathfrak{z}$. We conclude that for any nonsingular
2-step nilpotent metric Lie algebra $\left\lbrace\mathfrak{n},
\left\langle , \right\rangle\right\rbrace $ the associated Ricci
tensor is non-degenerate. The claim is now a consequence of Theorem
\ref{irreducible}.
\end{proof}

We then have the following theorem for generalized Heisenberg Lie
groups (H-type).

\begin{theorem}\label{gabriele}
Any simply connected metric H-type Lie group admits an expanding
non-gradient left invariant Ricci soliton structure.
\end{theorem}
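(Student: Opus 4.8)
The plan is to assemble the three assertions — that a metric H-type group carries a left invariant soliton, that it is expanding, and that it is nongradient — from the structural results already established, treating the existence as essentially known and concentrating on the type. First I would recall that a metric H-type Lie group is by construction simply connected and $2$-step nilpotent, with Lie algebra $\mathfrak{n}=\mathfrak{v}\oplus\mathfrak{z}$ and maps $j(z):\mathfrak{v}\rightarrow\mathfrak{v}$ satisfying the defining identity $j(z)^{2}=-\lvert z\rvert^{2}\mathrm{Id}_{\mathfrak{v}}$ for every $z\in\mathfrak{z}$. The existence of a left invariant Ricci soliton structure is then guaranteed by Theorem \ref{lauret1}: the standard metric solvable extension of an H-type algebra is precisely the (Einstein) Damek--Ricci solvable Lie group, so $(N^{n},g)$ is a nilsoliton; alternatively one may simply invoke the classification of known examples in \cite{Lau1}.

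For the sign of the soliton constant, I would note that every nilpotent Lie group is solvable, so Proposition \ref{luca3} applies verbatim and forces the soliton to be expanding. Concretely, a left invariant metric on a non-abelian nilpotent group has strictly negative constant scalar curvature by Milnor, whence Lemma \ref{segnoc} excludes the shrinking case and Theorem \ref{luca2} excludes the steady case, leaving only $\lambda>0$.

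The substantive point is the nongradient conclusion, which I would obtain from Theorem \ref{nonsingular} by verifying that an H-type algebra is \emph{nonsingular} in the sense of that theorem, i.e. that $ad\,x:\mathfrak{n}\rightarrow\mathfrak{z}$ is surjective for every $x\notin\mathfrak{z}$. Since $ad\,x$ depends only on the $\mathfrak{v}$-component of $x$, it suffices to treat nonzero $x\in\mathfrak{v}$. Using the relation $j(z)x=(ad\,x)^{*}z$ together with the H-type identity, the square $j(z)^{2}=-\lvert z\rvert^{2}\mathrm{Id}$ shows that $j(z)$ is invertible for each $z\neq 0$; hence for a fixed nonzero $x$ the equation $(ad\,x)^{*}z=j(z)x=0$ forces $z=0$, so $(ad\,x)^{*}:\mathfrak{z}\rightarrow\mathfrak{v}$ is injective and $ad\,x$ is therefore onto $\mathfrak{z}$. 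This is exactly the nonsingularity hypothesis, and Theorem \ref{nonsingular} then yields that no left invariant gradient soliton structure can exist.

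The main obstacle is the bookkeeping in this last step: one must keep straight the adjoint relation between $ad\,x$ and $j(z)$ and confirm that invertibility of the family $\{j(z)\}$ is genuinely equivalent to the surjectivity condition in the definition of nonsingular, rather than merely sufficient for nondegeneracy of the Ricci tensor. Once nonsingularity is secured, the theorem follows immediately by combining the existence from Theorem \ref{lauret1}, the expanding sign from Proposition \ref{luca3}, and the nongradient property from Theorem \ref{nonsingular}.
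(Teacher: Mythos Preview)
Your proof is correct and follows essentially the same route as the paper: existence via Theorem~\ref{lauret1} and the Damek--Ricci Einstein extension, the expanding sign via Proposition~\ref{luca3}, and the nongradient property via Theorem~\ref{nonsingular} after checking that the H-type identity forces each $j(z)$ to be invertible and hence the algebra to be nonsingular. In fact you spell out the ``routine linear algebra argument'' (injectivity of $(ad\,x)^{*}$ from invertibility of $j(z)$) that the paper leaves implicit.
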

\begin{proof}
Recall that a 2-step nilpotent metric lie algebra
$\left\lbrace\mathfrak{n}, \left\langle , \right\rangle\right\rbrace
$ is of Heisenberg generalized type if $j(z)^{2} =-\lvert   z
\rvert^{2}id_{\mathfrak{v}}$ for all $z\in\mathfrak{z}$. This
clearly implies that for any $z\in\mathfrak{z}$ the linear map
$j(z)$ is nonsingular. Now a routine linear algebra argument shows
that the Lie algebra $\mathfrak{n}$ must be nonsingular. By Theorem
\ref{nonsingular} we conclude that a H-type Lie group cannot be a
gradient Ricci soliton. For what regard the existence we follow
Lauret. The standard metric solvable extension of the H-type Lie
groups are exactly the harmonic Damek-Ricci spaces that are solvable
and Einstein, see for example \cite{Tricerri}. By Theorem
\ref{lauret1} we conclude that these groups are algebraic Ricci
solitons. Finally, using Proposition \ref{luca3} we have that these
structures must be of expanding type.
\end{proof}

\section{Final remarks}

Theorem \ref{gabriele} provides the first example of a countably
infinite family of non-gradient Ricci solitons. Recall that for any
$n\in\setN$ there exist a countably infinite number of
non-isomorphic H-type Lie algebras with $dim\mathfrak{z}=n$, see
again \cite{Tricerri}. It is also interesting to notice that any
H-type group admits a lattice, i.e., a cocompact discrete subroup.
This result is easily derived from the existence of an integral
structure on any H-type Lie algebra, see \cite{Grandall}. In fact a
well known result by Malcev, see for example \cite{Rag}, ensures
that a simply connected nilpotent Lie group admits a lattice iff its
Lie algebra admits a rational structure. Thus, let $N$ be a H-type
Lie group and $\Gamma$ a lattice. We  then have that the
left-Killing vector field $X$ is not preserved by the action of the
subroup $\Gamma$ otherwise we could project the soliton structure on
$(N,g)$ to a well defined homogeneous soliton structure on the
compact manifold $(N/\Gamma,\tilde{g})$. Thus, Theorem
\ref{gabriele} also provides an infinite family of
\textit{pseudosolitons}, i.e., compact manifold with no soliton
structure that acquire one when lifted to the universal cover, see
\cite{Chr} for the definition of pseudosolitons.

As a final application we derive that the 3-dimensional non-product
geometries ${\rm Nil}^{3}$, ${\rm Sol}^{3}$ and ${\rm SL}(2,\setR)$
do not admit any left invariant gradient soliton structure, since
any left invariant metric on these groups is de Rham indecomposable,
see \cite{Scott}.  Note how a similar result cannot be obtained by
using Proposition \ref{degenerate}. In fact, as proved in
\cite{Milnor}, the signature of the Ricci form on both ${\rm
Sol}^{3}$ and ${\rm SL}(2,\setR)$ can be either $(+,-,-)$ or
$(0,0,-)$. More in general Theorem \ref{irreducible} provides a
satisfactory obstruction in the case of irreducible left invariant
solitons.

It is interesting to note that very recently Jablonski in
\cite{Jablonski} has finally rule out the existence of a homogeneous
Ricci soliton structure on ${\rm SL}(2,\setR)$. This result, in a
sense, closes up the connection between the theory of homogeneous
Ricci solitons and the special geometries of Thurston.

Finally, I would like to point out that since the appearance of
first version of this work on the arXiv, the theory of homogeneous
Ricci solitons has vigorously grown. The interested reader should
for example consult \cite{Glick}, \cite{Jablonski0}, \cite{Payne},
\cite{Lau1}, \cite{Ise2}, \cite{Jablonski}, \cite{Wylie},
\cite{Naber}, \cite{Lau2}.

\vskip 5mm

\noindent\textbf{Acknowledgements}. I would like to thank Jorge
Lauret for his kind interest in this work and for several
constructive comments. Moreover, I would like to thank Professor
John Milnor for several useful discussions regarding \cite{Milnor}.
I also would like to thank the referee for guiding me through the
most recent literature on homogeneous solutions of the Ricci flow
and for pertinent comments on this manuscript.


\begin{thebibliography}{99}

\bibitem{Danielo} P. Baird, L. Danielo \textit{Three-dimensional Ricci solitons which project to surfaces}.
J. Reine Angew. Math. \textbf{2007}, 65-71.
\bibitem{Be} L. B\'erard-Bergery, \textit{Sur la curbure de m\'etriques riemanniennes invariantes des groupes de Lie et des espaces homog\`enes}.
Ann. Sci. \'Ecole Norm. Sup. \textbf{11}  (1978), no. 4, 543-576.
\bibitem{Tricerri}  J. Berndt, F. Tricerri, L. Vanhecke, \textit{Generalized Heisenberg groups and Damek-Ricci harmonis spaces}.
Lect. Notes in Math. \textbf{1598}, Springer-Verlag Berlin Heidelberg, 1995.
\bibitem{Bry} R. Bryant, Unpublished result on Ricci solitons.
\bibitem{Chow1}  B. Chow et al., \textit{The Ricci Flow: Techniques and Applications}.
Part I. Mathematical Surveys and Monographs, AMS, Providence, RI, 2007.
\bibitem{Grandall} G. Crandall, J. Dodziuk, \textit{Integral Structures on H-type Lie Algebras}.
J. Lie Theory \textbf{12}  (2002), no.1, 69-79.
\bibitem{Eber} P. Eberlein, \textit{Geometry of 2-step nilpotent groups with a left invariant metric}.
Ann. Sci. \'Ecole Norm. Sup. \textbf{27}  (1994), no. 5, 611-660.
\bibitem{Glick} D. Glickenstein, T. L. Payne, \textit{Ricci flow on three-dimensional, unimodular metric Lie algebras}.
Comm. Anal. Geom. \textbf{18}  (2010), no. 5, 927-961.
\bibitem{Chr} C. Guenther, J. Isenberg, D. Knopf, \textit{Linear stability of homogeneous Ricci solitons}.
Int. Math. Res. Not. (2006), Art. ID96253, 1-30.
\bibitem{Ha2} R. S. Hamilton, \textit{Three-manifolds with positive Ricci curvature}.
J. Differential Geometry \textbf{17}  (1982), no. 2, 255-306.
\bibitem{Ha1} R. S. Hamilton, \textit{The Ricci flow on surfaces}. Mathematics and general relativity (Santa Cruz, CA, 1986),
 237-262, Contemp. Math., \textbf{71}, Amer. Math. Soc., Providence, RI, 1988.
\bibitem{Ivey1} T. Ivey, \textit{Ricci solitons on a compact three-manifold}. Diff. Geom. Appl. \textbf{3}  (1993), 301-307.
\bibitem{Ivey2} T. Ivey, \textit{New examples of complete Ricci solitons}. Proc. Amer. Math. Soc.  \textbf{122}  (1994), 241-245.
\bibitem{Ise1} J. Isenberg, M. Jackson, \textit{Ricci flow on locally homogeneous geometries on closed manifold}.
J. Differential Geometry \textbf{35} (1992), no. 3, 723-741 .
\bibitem{Ise2} J. Isenberg, M. Jackson, P. Lu, \textit{Ricci flow on locally homogeneous 4-manifolds}.
Comm. Anal. Geom. \textbf{14}  (2006), 345-386.
\bibitem{Jablonski0} M. Jablonski, \textit{Concerning the existence of Einstein and Ricci soliton metrics on solvable Lie groups}.
Geometry \& Topology \textbf{15} (2011), no. 2, 735-764.
\bibitem{Jablonski} M. Jablonski, \textit{Homogeneous Ricci solitons}. \textit{arXiv:1109.6556}, (2011).
\bibitem{Koi} N. Koiso, \textit{On rotationally symmetric Hamilton's equation for K\"ahler-Einstein metrics}.
Recent Topics in differential and analytic geometry, 327-337, Adv.
Studies Pure Math., \textbf{18-I}, Academic Press, Boston, MA, 1990.
\bibitem{Lau} J. Lauret, \textit{Ricci solitons homogeneous nilmanifolds}. Math. Ann. \textbf{319}  (2001), no. 4, 715-733.
\bibitem{Lau1} J. Lauret, \textit{Minimal metrics on nilmanifolds}.
Differential geometry and its applications, 79-97, Mathyzpress,
Prague, (2005).
\bibitem{Lau2} J. Lauret, \textit{Ricci soliton solvmanifolds}. J. Reine Angew. Math. \textbf{650} (2011), 1-21.
\bibitem{Leger} G. F. Leger, S. Togo, \textit{Characteristically nilpotent Lie algebras}. Duke Math. J. \textbf{26} (1959), 623-628.
\bibitem{Lott} J. Lott, \textit{On the long time-time behavior of type-III Ricci flow solutions}. Math. Ann. \textbf{339}
(2007), 627-666.
\bibitem{Milnor} J. Milnor, \textit{Curvatures of left invariant metrics on Lie groups}.
Advances in Math. \textbf{21}  (1976), no. 3, 293-329.
\bibitem{Moro} V. V. Morozov, \textit{Classification of nilpotent Lie algebras of sixth order (in Russian)}.
Izv. Vyssh. Ucheb. Zar. \textbf{4} (1958), no. 4, 161-171.
\bibitem{Naber} A. Naber, \textit{Noncompact shrinking four solitons with nonnegative curvature}. \textit{arXiv:0710.5579}, (2008).
\bibitem{Payne} T. Payne, \textit{The Ricci flow for nilmanifolds}.
J. Mod. Dyn \textbf{4} (2010), no. 1, 65-90.
\bibitem{Per} G. Perelman, \textit{The entropy formula for the Ricci
flow and its geometric applications}. \textit{arXiv:0211.1159},
(2002).
\bibitem{Wylie} P. Petersen, W. Wylie, \textit{On gradient Ricci solitons with symmetry}. Proc. Amer. Math. Soc. \textbf{137}  (2009), no. 6, 2085-2092.
\bibitem{Rag}  M. S. Raghunathan, \textit{Discrete Subgroups of Lie Groups}. Springer, New York 1972.
\bibitem{Scott} P. Scott, \textit{The geometries of 3-manifold}. Bull. London Math. Soc. \textbf{15}  (1983), 401-487.


\end{thebibliography}
\end{document}